\documentclass{article}
\usepackage{graphicx} 
\usepackage{amsmath}
\usepackage{amsthm}
\usepackage{amssymb}
\usepackage{enumerate} 
\usepackage{todonotes}
\usepackage{subcaption}
\usepackage{multirow}
\usepackage{url}
\usepackage{makecell}
\usepackage{nicefrac}

\newif\ifarxiv
\arxivtrue

\ifarxiv
\usepackage{charter}
\usepackage{arxiv}
\fi

\usepackage{natbib}
\bibliographystyle{plainnat}

\usepackage[ruled]{algorithm2e}
\DontPrintSemicolon

\usepackage[capitalize,nameinlink,noabbrev]{cleveref}

\newtheorem{theorem}{Theorem}

\newtheorem{corollary}[theorem]{Corollary}
\newtheorem{proposition}[theorem]{Proposition}

\newcommand{\R}{\mathbb{R}}

\DeclareMathOperator*{\argmax}{arg\,max} 
\DeclareMathOperator*{\argmin}{arg\,min} 
\DeclareMathOperator*{\convexh}{conv}

\newcommand{\innp}[2]{\left\langle #1, #2 \right\rangle}

\newcommand{\conv}[1]{\convexh\left( #1\right)}
\newcommand{\eps}{\varepsilon}

\newcommand{\Xset}{{\ensuremath{\mathcal{X}}}\xspace}

\title{A Frank-Wolfe Algorithm for Oracle-based Robust Optimization}
\author{Mathieu Besan\c{c}on\\
Université Grenoble Alpes, Inria, CNRS, LIG\\
Grenoble, France\\
\texttt{mathieu.besancon@inria.fr}
\And
Jannis Kurtz \\
Amsterdam Business School, University of Amsterdam\\
Amsterdam, Netherlands\\
\texttt{j.kurtz@uva.nl}
}
\date{}

\begin{document}

\maketitle

\begin{abstract}
We tackle robust optimization problems under objective uncertainty in the oracle model, i.e.,~when the deterministic problem is solved by an oracle. The oracle-based setup is favorable in many situations, e.g., when a compact formulation of the feasible region is unknown or does not exist. We propose an iterative method based on a Frank-Wolfe type algorithm applied to a smoothed version of the piecewise linear objective function. Our approach bridges several previous efforts from the literature, attains the best known oracle complexity for the problem and performs better than state-of-the-art on high-dimensional problem instances, in particular for larger uncertainty sets.
\end{abstract}

\section{Introduction}
Optimization under uncertainty has been gaining a lot of attraction in the last decades, allowing for decision-making frameworks which can hedge against fluctuating or unknown parameters.
Robust optimization (RO) tackles situations in which the decision-maker can describe a set of possible values for the unknown parameters, but not necessarily their distribution \citep{ben2009robust,kouvelis2013robust,bertsimas2022robust,goerigk2024introduction}.
We consider objective-robust optimization problems of the form
\begin{equation}\label{eq:RO}\tag{RO}
    \min_{x\in \mathcal X} \max_{c\in \mathcal U} c^\top x,
\end{equation}
where $\mathcal X, \mathcal U\subset\R^n$ are compact convex sets. We call $\mathcal X$ the feasible region and $\mathcal U$ the uncertainty set.

Consider first the case where a description of the feasible region $\mathcal X$ is known, e.g., it can be described by a polynomial number of convex inequalities. In this case, for several convex uncertainty sets such as polyhedra and ellipsoids, the dualization technique yields a closed-form expression of the problem, i.e.,~dualizing the inner maximum expression, merging the corresponding minimization problem with the outer minimum, and obtaining a convex minimization problem which can be solved by off-the-shelf solvers.
While this reformulation can often be solved very efficiently, it requires a polynomial-sized description of the feasible region $\mathcal X$.
This is not necessarily the case in many relevant situations, in which the feasible set can be accessed only or more efficiently by a linear optimization oracle.
We provide three examples in robust optimization where the problem form we consider directly applies.
\paragraph*{Combinatorial Robust Optimization.}\label{ex:RO_lowerbound}
\;\newline
Combinatorial robust optimization problems are defined as
\begin{equation}\label{eq:comb_ro}
    \min_{x\in \mathcal Z} \max_{c\in \mathcal U} c^\top x
\end{equation}
where $\mathcal Z\subseteq \{ 0,1\}^n$ and $\mathcal U$ is a convex uncertainty set. This type of problem was intensively studied for a wide range of combinatorial problems; \cite{kouvelis2013robust,buchheim2018robust}.
The results indicate that robust combinatorial optimization problems are often NP-hard, even if the underlying combinatorial problem can be solved in polynomial time.
When using a branch-and-bound method to tackle such problems, tighter lower bounds can be obtained by replacing the continuous relaxation of Problem~\eqref{eq:comb_ro}
with:
\begin{equation}\label{eq:min-max_conv}
    \min_{x\in \conv{\mathcal Z}} \max_{c\in \mathcal U}\; c^\top x.
\end{equation}
This approach was applied e.g.~in \cite{bettiol2023oracle,al2020frank}.
Problem \eqref{eq:min-max_conv} is of the form \eqref{eq:RO}; however, since $\mathcal Z$ can represent any combinatorial problem, a polynomial-sized description of the set $\conv{\mathcal Z}$ may be unknown or does not exist, rendering the dualization approach intractable. On the other hand, linearly optimizing over $\conv{\mathcal Z}$ is equivalent to linearly optimizing over $\mathcal Z$, which can often be done by problem-specific algorithms.

\paragraph*{Min-max-min Robust Optimization.}\label{ex:RO_lowerbound}
\;\newline
The min-max-min robust optimization problem was introduced in \cite{buchheim2017min} and can be modeled as:
\[
\min_{x^1,\ldots ,x^k\in \mathcal Z}\max_{c\in \mathcal U}\min_{i=1,\ldots ,k} c^\top x^i
\]
where $k\in \mathbb N$ and $\mathcal Z\subseteq \{ 0,1\}^n$. The authors show that for $k\ge n+1$ the problem is equivalent to:
\begin{equation*}
\min_{x\in \conv{\mathcal Z}}\max_{c\in \mathcal U} \ c^\top x. 
\end{equation*}
which is of the form \eqref{eq:min-max_conv}.

\paragraph*{Two-stage Binary Robust Optimization.}\label{ex:two-stage_RO}
\;\newline
In two-stage binary robust optimization under objective uncertainty, two types of variables are considered, the here-and-now decisions $x$, which have to be determined before uncertainty is realized, and the wait-and-see decisions $y$ which can be determined after. These problems can be modeled as min-max-min problems of the form:
\begin{equation}\label{eq:two-stage_RO}
    \min_{x\in \mathcal Z} \max_{c\in \mathcal U} \min_{y\in \mathcal Y(x)} c^\top x + d^\top y,
\end{equation}
where $\mathcal Z\subseteq \{ 0,1\}^{n_x}$ is the feasible set for the here-and-now decisions, $\mathcal U$ is a convex uncertainty set and $\mathcal Y(x)\subseteq \{ 0,1\}^{n_y}$ is the feasible set of wait-and-see decisions, depending on the here-and-now decision $x$.
In \cite{kammerling2020oracle}, it was shown that Problem~\eqref{eq:two-stage_RO} can be solved by a branch \& bound method, where the branching is only performed over the here-and-now decisions $x$. In each of the nodes of the branch \& bound tree a lower bound is calculated by solving: 
\[
\min_{(x,y)\in \conv{\mathcal Z \times \mathcal Y(x)}} \max_{c\in \mathcal U} \ c^\top x + d^\top y
\]
which is again a problem of the type \eqref{eq:min-max_conv} where usually no polynomial description of the set $\conv{\mathcal{Z} \times \mathcal{Y}(x)}$ is known.

All the above examples motivate the use of so-called \textit{oracle-based algorithms}, where we assume that the feasible set $\mathcal X$ can only be accessed by a linear optimization oracle. More precisely, a linear optimization oracle is an algorithm which returns for every $c\in \mathcal{U}$ an optimal solution of the deterministic problem
$\min_{x\in\mathcal X} \ c^\top x$,
alleviating the need for a polynomial-sized formulation of $\mathcal X$.
Instead, any linear optimization algorithm over $\mathcal X$ can be used.
Hence, combinatorial algorithms for the underlying deterministic problem can be used.

For our analysis, we assume that each call to the oracle can be performed in constant runtime. We call an algorithm which solves \eqref{eq:RO} \textit{oracle-polynomial} if it has a polynomial runtime in the input parameters under the assumption that receiving an optimal solution of the deterministic problem by the oracle requires constant runtime.

Oracle-based algorithms have several properties of interest. As mentioned above, no mathematical optimization formulation of the feasible region is required.
If specialized algorithms were designed for the deterministic problem, they can be used directly in the oracle-based algorithm for the robust problem \eqref{eq:RO}.
Furthermore, an analysis of the runtime of the oracle-based algorithm gives insights into the connection between the complexity of the deterministic problem and the robust optimization problem. More precisely, if an oracle-polynomial algorithm exists for \eqref{eq:RO} then the problem can be solved in polynomial time for every deterministic problem which can be solved in polynomial time.

Our contributions are the following:
\begin{itemize}
    \item We design an oracle-based algorithm for \eqref{eq:RO} based on Frank-Wolfe applied to a smoothed version of the problem, unifying and connecting several previous lines of work.
    \item We derive a bound on the number of oracle calls needed by our approach that matches the current best known bound.
    \item We derive the first bound on the number of oracle calls needed to solve min-max-min robust optimization problems.
    \item We test our method on several instance sets showing that it outperforms other state-of-the-art methods on large-dimensional instances and in the high-uncertainty regime, i.e., when the set $\mathcal U$ is larger.
\end{itemize}

\subsection{Preliminaries}
\subsubsection{Notation and Definitions}
In the following we denote by $\|\cdot \|$ the Euclidean norm and define the following parameters: the diameter of the set \Xset is defined as
$D:=\max_{x,y\in \mathcal X} \| x-y\|$
and the maximum diameter of the uncertainty set is defined as 
$M:=\max_{c,c'\in\mathcal U} \| c-c'\|$. The maximum feasible solution length is defined as $D_{\max}:=\max_{x\in \mathcal X} \| x\|$. We denote by $\conv{\mathcal S}$ the convex hull of a set $\mathcal S$.

\subsubsection{Frank-Wolfe Algorithms}
Frank-Wolfe (FW) algorithms optimize nonlinear differentiable functions over a compact convex set
and have gained significant traction in optimization and machine learning in the last decade \citep{jaggi2013revisiting,bomze2021frank,braun2022conditional}.
This success can in part be explained by the flexible assumptions the algorithm requires on the problem representation.
Namely, only first-order information on the objective function and access to a \emph{Linear Minimization Oracle} (LMO) are required which, given a linear objective, computes an extreme point of the feasible region minimizing this objective.

Optimizing a convex, $L$-Lipschitz-smooth function $f$ over a compact convex set
of diameter $D$ results in a primal optimality gap after $t$ iterations of
\begin{align*}
    f(x_t) - f^* \leq \frac{2LD^2}{t+2}
\end{align*}
when using the agnostic step size with the standard FW algorithm \citep[Remark 2.3]{braun2022conditional}.\\

Frank-Wolfe methods are known to fail to converge when the objective function is only subdifferentiable, with an example proposed in \citet{nesterov2018complexity}, which, importantly for our problem, can be seen as an instance of (RO) with budgeted uncertainty.
\citet{yurtsever2018conditional} established a framework for the minimization of the sum of a differentiable function and a convex function with a well-defined proximal operator with constraints under a linear oracle model based on Frank-Wolfe. The authors show a convergence in $\mathcal{O}(\varepsilon^{-2})$ iterations based on an adaptive smoothing parameter. The difference in convergence rate is negligible compared to a fixed smoothing parameter based on the targeted accuracy.

\subsubsection{Oracle-based Robust Optimization}
Several oracle-based algorithms were derived for robust optimization problems of the form \eqref{eq:RO} for convex or discrete sets $\mathcal X$. In \citet{bertsimas2003robust} the authors show that for $\mathcal X\subseteq \{ 0,1\}^n$ and for a budgeted uncertainty set $\mathcal U$, Problem~\eqref{eq:RO} can be solved with $n+1$ oracle calls. This result was later improved in \citet{lee2014short} to at most $\lceil\frac{n-\Gamma}{2}\rceil +1 $ oracle calls, where $\Gamma$ is the number of uncertain parameters which can deviate from its mean value at the same time. The result was generalized in \cite{poss2018robust} to uncertainty sets defined by a fixed number of knapsack constraints.

To solve \eqref{eq:RO} with convex $\mathcal X$, \cite{buchheim2017min} introduces a constraint generation algorithm (CGA) which is performed on the dual problem of \eqref{eq:RO}. Deriving a new cut at each iteration can be done by minimizing the deterministic problem, i.e., calling the LMO. This algorithm turns out to perform very well on moderate sized combinatorial problems or when the uncertainy budget $\Gamma$ is small.

\cite{ben2015oracle} developed an oracle-based algorithm for robust optimization problems under constraint uncertainty, using tools from online convex optimization.
When applied to \eqref{eq:RO}, the presented method is equivalent to a projected subgradient method performed on the dual problem of \eqref{eq:RO}, in the uncertainty set space.
It finds an $\epsilon$-optimal point in the dual in at most $\nicefrac{D_{\max}^2M^2}{\eps^2}$ iterations. For constraint uncertainty, an additional binary search has to be performed over the optimal value, increasing the bound on the number of iterations by a factor of $\log \left(\frac{1}{\eps}\right)$. At each iteration, the algorithm calls a linear optimization oracle over $\mathcal X$ and a projection oracle over $\mathcal U$.

In \cite{buchheim2018frank}, a Frank-Wolfe type method is used to solve mean-risk optimization problems, which are equivalent to \eqref{eq:RO} with $\mathcal U$ being an ellipsoid and $\mathcal X$ convex and defined by one knapsack constraint. The algorithm uses a minimization oracle over $\mathcal X$.

In \cite{al2020frank}, Frank-Wolfe is applied to Problem \eqref{eq:RO} under ellipsoidal uncertainty. Since for ellipsoidal uncertainty, and more generally strongly-convex sets, the objective function of \eqref{eq:RO} is differentiable, the classical FW framework can be applied.

Finally, in \cite{bettiol2023oracle} a simplicial decomposition algorithm is presented, which can be compared to a sub-gradient version of the Fully-Corrective Frank-Wolfe algorithm.
Iteratively, the algorithm uses a linear minimization oracle over $\mathcal X$ and a convex hull oracle which returns an optimal solution to the problem
\begin{equation}\label{eq:min-max_bettiol}
\min\limits_{x\in\conv{x^1,\ldots ,x^k}} \max_{c\in \mathcal U} c^\top x.
\end{equation}
Since the latter problem is the dual of Problem \eqref{eq:dual_ro_epigraph}, the algorithm of \cite{bettiol2023oracle} is performing the same major steps as the constraint generation algorithm in \cite{buchheim2017min}; see Section \ref{sec:equivalence_algorithms} for a more detailed discussion. The authors additionally incorporate a technique to remove solutions from the previous iterates from Problem \eqref{eq:min-max_bettiol}.

A summary of the aforementioned methods is presented in Table~\ref{tab:overview_oracle_algorithms} which shows the class of feasible region $\mathcal{X}$, the class of uncertainty sets $\mathcal{U}$, the required oracles, and the maximum number of oracle calls needed to ensure optimality (up to an additive accuracy of $\eps>0$).

\begin{table*}[h!]
    \centering
    \resizebox{1.1\columnwidth}{!}{
    \begin{tabular}{cc|ccccc}
        Method & References & $\mathcal X$ & $\mathcal U$ & Oracle $1$ & Oracle $2$ & \# Oracle Calls \\
        \hline 
        \makecell{List of deterministic\\ problems} & \makecell{\cite{bertsimas2003robust}, \\ 
        \cite{lee2014short}, \\ \cite{alvarez2013note}}  & $\subseteq \{ 0,1\}^n$ & budgeted & $\min\limits_{x\in\mathcal X} c^\top x$ & - & $\lceil\frac{n-\Gamma}{2}\rceil +1 $\\
        \hline
        \makecell{List of deterministic \\ problems} & \cite{poss2018robust} & $\subseteq \{ 0,1\}^n$ & \makecell{$s$ knapsack \\ constraints} & $\min\limits_{x\in\mathcal X} c^\top x$ & - & $\mathcal O\left(s^sn^s\right)$\\
        \hline
        \makecell{Projected \\ subgradient descent} & \cite{ben2015oracle} & convex  & convex & $\min\limits_{x\in\mathcal X} c^\top x$ & $\min\limits_{c\in\mathcal U} \|c-\hat c\|^2$ & $\frac{D_{\text{max}}^2 M^2}{\eps^2}$\\
        \hline
        \makecell{Away Frank-Wolfe}  & \cite{buchheim2018frank} & $\left\{ y\ge 0: \ a^\top y \le b\right\}$ & ellipsoid & $\min\limits_{x\in\mathcal X} c^\top x$ & $\max\limits_{x\in\mathcal X} c^\top x$ & - \\
        \hline
        \makecell{Vanilla Frank-Wolfe}  & \cite{al2020frank} & convex & ellipsoid & $\min\limits_{x\in\mathcal X} c^\top x$ & - & - \\
        \hline
        \makecell{Simplicial decomposition based\\ Frank-Wolfe type}  & \cite{bettiol2023oracle} & convex & convex & $\min\limits_{x\in\mathcal X} c^\top x$ & $\min\limits_{x\in\conv{x^1,\ldots ,x^k}} \max_{c\in \mathcal U} c^\top x$ & - \\
        \hline
        \makecell{Smoothing \\ Frank-Wolfe}  & this work & convex & convex & $\min\limits_{x\in\mathcal X} c^\top x$ & $\min\limits_{c\in\mathcal U} \|c-\hat c\|^2$ & $\frac{4D^2M^2}{\eps^2}$
    \end{tabular}}
    \caption{Overview of oracle-based algorithms for robust optimization problems of the form \eqref{eq:RO}.}
    \label{tab:overview_oracle_algorithms}
\end{table*}

Oracle-based algorithms were also used to solve two-stage robust binary optimization problems \citep{kammerling2020oracle}, min-max-min robust combinatorial optimization problems \citep{buchheim2017min}, and for robust planning of production routing \citep{borumand2024oracle}. In \cite{buchheim2020note}, the author proves that in general no oracle-polynomial algorithm exists for Problem \eqref{eq:RO} if the uncertainty set contains a finite number of scenarios and $\mathcal X$ is binary. 

\subsection{Equivalence between Algorithms}\label{sec:equivalence_algorithms}
In the following, we present a more detailed description of the constraint generation algorithm (CGA) developed in \cite{buchheim2017min} and the simplicial decomposition algorithm (SDA) developed in \cite{bettiol2023oracle} and show that CGA is performing the same steps as SDA but applied to the dual problem of \eqref{eq:RO}.

To solve \eqref{eq:RO} with convex $\mathcal X$, \citet{buchheim2017min} consider the dual problem of \eqref{eq:RO} given as
\begin{equation}\label{[eq:dual_RO}\tag{Dual-RO}
\max_{c\in\mathcal U}\min_{x\in \mathcal X} c^\top x.
\end{equation}
For a finite set of solutions $\mathcal X'\subset \mathcal X$, the latter problem can be solved via an epigraph reformulation as
\begin{equation}\label{eq:dual_ro_epigraph}
\begin{aligned}
    \max_{c,\tau} \ &\tau \\
    s.t. \quad &  \tau \leq c^\top x \quad \forall \ x\in\mathcal X' \\
    & c\in \mathcal U
\end{aligned}
\end{equation}
before iteratively adding new solutions from $\mathcal X$ which cut off the current optimal solution $(c^*,z^*)$.
Finding such a violating cut can be done by minimizing the deterministic problem with objective $c^*$, i.e., calling the LMO. This algorithm turns out to perform very well on moderate sized problems.

In contrast to CGA, SDA solves the primal version
\begin{equation}\label{eq:primal_RO}
\min_{x\in \conv{\mathcal X'}}\max_{c\in\mathcal U} c^\top x
\end{equation}
for the current finite subset $\mathcal X'\subset \mathcal X$ (instead of solving \eqref{eq:dual_ro_epigraph}), which leads to the same objective value as \eqref{eq:dual_ro_epigraph}. However, an optimal solution $x^*=\sum_{x\in\mathcal X'}\alpha_x x \in \conv{\mathcal X'}$ of \eqref{eq:primal_RO} and an optimal solution in $\argmax_{c\in \mathcal U}c^\top x^*$ has to be calculated in each iteration, while CGA only works with an optimal solution $c^*\in\mathcal U$. Afterwards, both algorithm apply the LMO to compute the next vertex $x_t\in \mathcal X$.
CGA only has to solve Problem~\eqref{eq:primal_RO} in the last iteration to obtain the final optimal solution.

Finally, we highlight that SDA itself is equivalent to fully-corrective Frank-Wolfe (FCFW) \citep{jaggi2013revisiting} applied to the original non-smooth function, in which the correction step (optimization over the active set) solves the auxiliary LP. However, note that the proof of convergence in finite time from \citet{bettiol2023oracle} is not based on the connection to FCFW. Furthermore, the proof of convergence of FCFW cannot be used to establish a convergence in oracle complexity of SDA, since it relies on the objective smoothness.

These two connections together do not come as a surprise, since duality between FCFW and the cutting plane algorithm was established in different contexts, e.g.~in \citet{zhou2018limited}.

\section{Oracle Algorithms for Robust Optimization via Smoothing}

In this section we first introduce a smoothed version of Problem~\eqref{eq:RO} using the techniques established in \cite{nesterov2005smooth}, which can be solved by classical variants of the Frank-Wolfe algorithm.
Afterwards, we derive the number of iterations (and hence oracle calls) we need to solve Problem~\eqref{eq:RO} up to an additive error of $\eps$.

\subsection{Objective Smoothing}

In the following we consider the smoothed robust optimization problem:
\begin{equation}\label{eq:smoothed_RO}
\min_{x\in \mathcal X}\max_{c\in \mathcal U} \ c^\top x - \frac{\mu}{2}\| c-c_0\|^2
\end{equation}
where $c_0\in \mathcal{U}$ is a fixed scenario and $\mu>0$ is a fixed smoothing parameter. We denote the objective function of the original problem \eqref{eq:RO} as 
\[
f(x):=\max_{c\in \mathcal U} \ c^\top x
\]
and of the smoothed robust problems as
\[
f_{\mu}(x):= \max_{c\in \mathcal U} \ c^\top x - \frac{\mu}{2} \| c-c_0\|^2.
\]
In contrary to $f$, the smoothed function $f_{\mu}$ is differentiable and Lipschitz-smooth by strong convexity of its conjugate;
see \cref{fig:smoothing} for an illustration.
Furthermore, its gradient is given as the unique optimal solution of the maximization problem over $\mathcal U$
\begin{equation}\label{eq:gradient_smoothed_problem}
   \nabla  f_{\mu}(x) = \argmax_{c\in \mathcal U} \ c^\top x - \frac{\mu}{2} \| c-c_0\|^2 . 
\end{equation}

\begin{figure}
    \centering
    \includegraphics[trim={0.7cm 0.5cm 1.0cm 1.5cm},clip, width=0.23\textwidth]{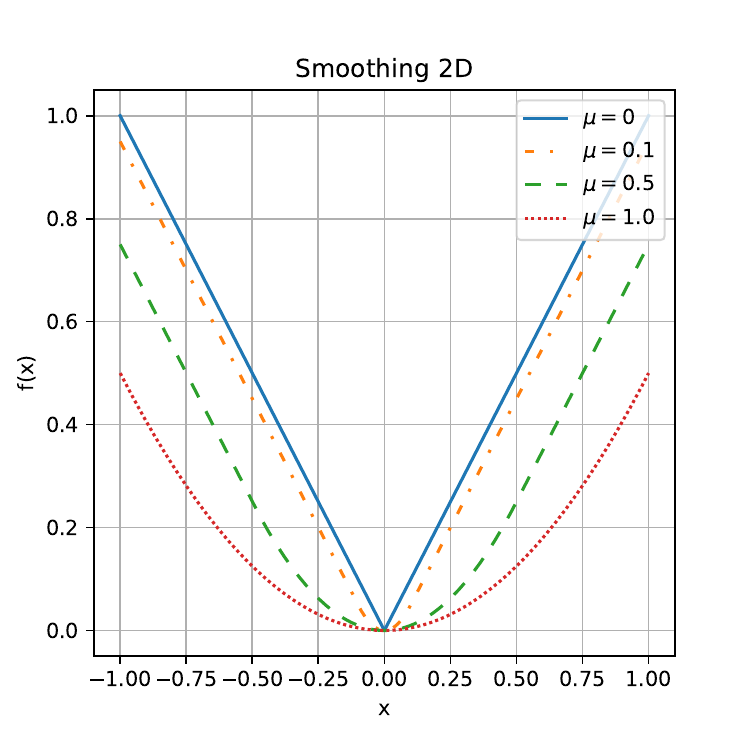}
    \caption{Smoothing of the function $f(x)=\max\limits_{-1\le c\le 1} c^\top x$ for different $\mu$ values.}
    \label{fig:smoothing}
\end{figure}

From \citet[Theorem 1]{nesterov2005smooth}, it follows that the gradient of $f_{\mu}$ is Lipschitz continuous with Lipschitz constant $L:=\frac{1}{\mu}$. The smoothed Frank-Wolfe algorithm is presented in \cref{alg:smoothed}.

\begin{algorithm}
\KwData{Linear oracle for $\mathcal{X}$, projection oracle for $\mathcal{U}$, $\mathbf{x}_0 \in\mathcal{X}$, $T > 0$, $\mu > 0$}
\For{$t \in 1\dots T$}{
    $g_t \gets \nabla f_{\mu}(\mathbf{x}_t)$ \hfill (calling the projection oracle on $\mathcal U$)\;
    $\mathbf{v}_t = \argmin\limits_{\mathbf{v}\in\mathcal{X}} \langle \mathbf{v}, g_t \rangle $ \hfill (calling the linear optimization oracle over $\mathcal X$)\;
    $\gamma_t = \frac{2}{t+1}$\;
    $\mathbf{x}_{t+1} = \mathbf{x}_{t} + \gamma_t (\mathbf{v}_{t} - \mathbf{x}_{t})$\;
}
\Return $\mathbf{x}_T$
\caption{Smoothed Frank-Wolfe}
\label{alg:smoothed}
\end{algorithm}

\subsection{Oracle Complexity via Frank-Wolfe Algorithms}
We now present the oracle complexity of the smoothed FW approaches to (RO).
\begin{theorem}\label{thm:number_iterations}
Let $\eps>0$ and $\mu=\frac{\eps}{M^2}$. Let $x_t\in \mathcal X$ be the solution calculated at the $t$-th iteration of the Frank-Wolfe algorithm applied to \eqref{eq:smoothed_RO} and let $x^*$ be an optimal solution of Problem~\eqref{eq:RO}. Then, in order to obtain a primal gap
\[
f(x_t)-f(x^*) \le \eps,
\]
we need $t$ iterations with
\[
t\ge \frac{4 D^2M^2}{\eps^2}.
\]
\end{theorem}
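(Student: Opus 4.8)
The plan is to combine three ingredients: the standard Frank-Wolfe convergence rate on the smoothed objective $f_\mu$, a uniform bound on the gap $|f(x) - f_\mu(x)|$ over $\mathcal X$, and the choice $\mu = \eps/M^2$. First I would recall that by \citet[Theorem 1]{nesterov2005smooth}, $f_\mu$ is differentiable with $L$-Lipschitz gradient for $L = 1/\mu$, so Algorithm~\ref{alg:smoothed} is exactly the vanilla Frank-Wolfe method applied to the convex $L$-smooth function $f_\mu$ over the compact convex set $\mathcal X$ of diameter $D$. Hence, by the cited rate with the agnostic step size $\gamma_t = 2/(t+1)$, after $t$ iterations
\[
f_\mu(x_t) - \min_{x\in\mathcal X} f_\mu(x) \le \frac{2LD^2}{t+2} = \frac{2D^2}{\mu(t+2)}.
\]

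Second I would control the smoothing error. For every $x \in \mathcal X$ we have $f_\mu(x) \le f(x)$ trivially (subtracting a nonnegative term), and $f(x) - f_\mu(x) \le \frac{\mu}{2}\max_{c\in\mathcal U}\|c - c_0\|^2 \le \frac{\mu}{2} M^2$, since $c_0 \in \mathcal U$ and $M$ is the diameter of $\mathcal U$. This gives the two-sided bound $0 \le f(x) - f_\mu(x) \le \frac{\mu M^2}{2}$ uniformly on $\mathcal X$. In particular, writing $x^*_\mu \in \argmin_{x\in\mathcal X} f_\mu(x)$ and $x^*$ for the minimizer of $f$, we get $\min_\mathcal X f_\mu \le f_\mu(x^*) \le f(x^*)$, so the optimality gap of $x_t$ for the original problem decomposes as
\[
f(x_t) - f(x^*) \;=\; \bigl(f(x_t) - f_\mu(x_t)\bigr) + \bigl(f_\mu(x_t) - \textstyle\min_\mathcal X f_\mu\bigr) + \bigl(\textstyle\min_\mathcal X f_\mu - f(x^*)\bigr) \;\le\; \frac{\mu M^2}{2} + \frac{2D^2}{\mu(t+2)} + 0.
\]

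Third I would plug in $\mu = \eps/M^2$: the first term becomes $\eps/2$, and requiring the second term to be at most $\eps/2$ gives $\frac{2D^2 M^2}{\eps} \le \frac{\eps}{2}(t+2)$, i.e. $t + 2 \ge \frac{4D^2 M^2}{\eps^2}$, which holds in particular whenever $t \ge \frac{4D^2 M^2}{\eps^2}$. Summing the two halves yields $f(x_t) - f(x^*) \le \eps$, as claimed.

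I do not expect a genuine obstacle here; the argument is a textbook smoothing-plus-Frank-Wolfe reduction. The only points requiring a little care are: verifying that the hypotheses of the FW rate quoted in the preliminaries (convexity and $L$-smoothness of $f_\mu$) are legitimately in force — this is exactly what the Nesterov smoothing theorem provides, together with the fact that $f_\mu$ is convex as a supremum of affine-minus-constant functions in $x$; and being slightly careful with the off-by-one in $t+2$ versus $t$ so that the clean bound $t \ge 4D^2M^2/\eps^2$ suffices. One could also optimize the split of $\eps$ between the two error sources, but the symmetric $\eps/2$ split already gives the stated constant, so I would not pursue that.
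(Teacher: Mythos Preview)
Your proposal is correct and follows essentially the same route as the paper: bound the smoothing error $f(x_t)-f_\mu(x_t)\le \mu M^2/2$, use the standard FW primal rate on $f_\mu$ with $L=1/\mu$, observe $\min_{\mathcal X} f_\mu \le f(x^*)$, and split $\eps$ evenly between the two sources. The only cosmetic differences are that the paper writes the decomposition as a chain of inequalities rather than a three-term sum and uses $t+3$ (then relaxes to $t$) instead of $t+2$ in the FW rate.
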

\begin{proof}
From the classical convergence analysis of the Frank-Wolfe algorithm, and since the gradient of $f_{\mu}$ is Lipschitz continuous with Lipschitz constant $\frac{1}{\mu}$, it follows that
\begin{equation}\label{eq:convergence_bound}
f_{\mu}(x_t) - f_{\mu}(x_{\mu}^*) \le \frac{2D^2\frac{1}{\mu}}{t+3},
\end{equation}
where $x_{\mu}^*$ is the optimal solution of \eqref{eq:smoothed_RO}; see e.g.~\cite{braun2022conditional}.
We have
\begin{align*}
    f(x_t)-f(x^*) &\le f(x_t) - f_{\mu}(x^*) \\
    & \le f(x_t) - f_{\mu}(x_{\mu}^*) \\
    & = f(x_t) - f_{\mu}(x_t)+ f_{\mu}(x_t) - f_{\mu}(x_{\mu}^*)
\end{align*}
where the first inequality holds since $f(x)\ge f_{\mu}(x)$ for all $x\in\mathcal X$ and the second inequality holds since $x_{\mu}^*$ is the minimizer of $f_{\mu}$ over $\mathcal X$. Let $c_t\in \mathcal{U}$ be an optimal solution of $\max_{c\in U} c^\top x_t$. We can combine both inequalities as
\begin{align*}
    & f(x_t) - f_{\mu}(x_t)+ f_{\mu}(x_t) - f_{\mu}(x_{\mu}^*) \\
    & \le c_t^\top x_t - c_t^\top x_t + \frac{\mu}{2}\|c_t-c_0\|^2 + f_{\mu}(x_t) - f_{\mu}(x_{\mu}^*)\\
    & \le \frac{\eps}{2} + f_{\mu}(x_t) - f_{\mu}(x_{\mu}^*) \\
    & \le \frac{\eps}{2} + \frac{2D^2\frac{1}{\mu}}{t+3} \\
    & \le \frac{\eps}{2} +\frac{\eps}{2} =\eps
\end{align*}
where the first inequality is a consequence of $c_t$ maximizing $c^\top x_t$ over $\mathcal U$, the second inequality follows from the definition of $\mu$ and since $\|c_t-c_0\|\le M$, the third inequality follows from \eqref{eq:convergence_bound} and the last inequality stems from
\[
\frac{2D^2\frac{1}{\mu}}{t+3}\le \frac{2D^2\frac{1}{\mu}}{t}
\]
and by substituting $\mu=\frac{\eps}{M^2}$ and $t\ge \frac{4 D^2M^2}{\eps^2}$.
\end{proof}

The following corollaries follow directly from \cref{thm:number_iterations}.
\begin{corollary}
If the optimization problem in \eqref{eq:gradient_smoothed_problem} can be solved in polynomial time, then the robust optimization problem \eqref{eq:RO} can be solved up to an accuracy of $\eps>0$ in oracle-polynomial time with at most $\frac{4 D^2M^2}{\eps^2}$ oracle calls.
\end{corollary}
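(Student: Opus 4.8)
The plan is to turn \cref{thm:number_iterations} into an algorithmic statement by running \cref{alg:smoothed} with the prescribed smoothing parameter for the prescribed number of iterations, and then to check that every per-iteration operation is polynomial. First I would fix $\mu = \eps/M^2$, pick any starting point $x_0\in\mathcal X$ (for instance via one oracle call with an arbitrary objective), and run \cref{alg:smoothed} for $T := \lceil 4D^2M^2/\eps^2\rceil$ iterations. \cref{thm:number_iterations} then immediately yields $f(x_T) - f(x^*)\le\eps$, so correctness is not at stake; the corollary is really a claim about runtime.

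Next I would verify that one iteration of \cref{alg:smoothed} costs polynomial time plus one oracle call. Computing $g_t = \nabla f_\mu(x_t)$ means, by \eqref{eq:gradient_smoothed_problem}, solving $\argmax_{c\in\mathcal U} c^\top x_t - \tfrac{\mu}{2}\|c-c_0\|^2$; completing the square rewrites this as the Euclidean projection $\mathrm{proj}_{\mathcal U}(c_0 + x_t/\mu)$, which is polynomial precisely by the hypothesis on \eqref{eq:gradient_smoothed_problem}. The line $v_t = \argmin_{v\in\mathcal X}\langle v, g_t\rangle$ is a single call to the linear optimization oracle over $\mathcal X$, counted as constant time per the convention in the introduction. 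The remaining steps — setting $\gamma_t = 2/(t+1)$ and forming the convex combination $x_{t+1} = x_t + \gamma_t(v_t - x_t)$ — are $\mathcal{O}(n)$ arithmetic.

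Summing over the $T = \mathcal{O}(D^2M^2/\eps^2)$ iterations, the method performs at most $\frac{4D^2M^2}{\eps^2}$ (up to rounding) calls to the linear optimization oracle over $\mathcal X$, the same number of projections onto $\mathcal U$, and $\mathcal{O}(nT)$ further operations; since $D$, $M$ and $1/\eps$ are part of the input, $T$ is polynomial in the input size, and by the notion of oracle-polynomiality from the introduction the whole procedure is oracle-polynomial with the stated bound on oracle calls. I do not anticipate a genuine obstacle here: the only non-mechanical point is recognizing \eqref{eq:gradient_smoothed_problem} as a projection onto $\mathcal U$, so that the hypothesis is exactly the condition making the per-iteration cost polynomial; everything else is accounting, modulo the harmless ceiling needed to make $T$ an integer.
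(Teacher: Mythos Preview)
Your proposal is correct and follows exactly the route the paper intends: the paper simply states that the corollary ``follows directly from \cref{thm:number_iterations}'', and what you have written is precisely the unpacking of that sentence—running \cref{alg:smoothed} with $\mu=\eps/M^2$ for $\lceil 4D^2M^2/\eps^2\rceil$ iterations and checking that each iteration costs one LMO call plus polynomial work (the projection \eqref{eq:gradient_smoothed_problem} by hypothesis, and $\mathcal{O}(n)$ arithmetic for the convex combination). There is nothing to add; your argument is more explicit than the paper's, but the content is identical.
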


\begin{corollary}
If the optimization problem in \eqref{eq:gradient_smoothed_problem} can be solved in polynomial time, then the min-max-min robust optimization problem with $k\ge n+1$ solutions can be solved in oracle polynomial time with at most $\frac{4 nM^2}{\eps^2}$ oracle calls.
\end{corollary}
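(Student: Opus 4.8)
The plan is to reduce the min-max-min problem to an instance of \eqref{eq:RO} and then invoke \cref{thm:number_iterations} together with an elementary bound on the diameter $D$. First I would recall, as noted in the introduction following \cite{buchheim2017min}, that for $k \ge n+1$ the min-max-min robust problem over $\mathcal{Z} \subseteq \{0,1\}^n$ is equivalent to $\min_{x \in \conv{\mathcal{Z}}} \max_{c \in \mathcal{U}} c^\top x$, which is precisely Problem~\eqref{eq:RO} with the compact convex feasible region $\mathcal{X} := \conv{\mathcal{Z}}$.

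Next I would bound the diameter of this feasible region. Since $\mathcal{Z} \subseteq \{0,1\}^n$ we have $\conv{\mathcal{Z}} \subseteq [0,1]^n$, hence $D = \max_{x,y \in \conv{\mathcal{Z}}} \norm{x-y} \le \operatorname{diam}\big([0,1]^n\big) = \sqrt{n}$, i.e.\ $D^2 \le n$. Applying \cref{thm:number_iterations} to this instance with $\mu = \eps/M^2$, the Frank-Wolfe iterate $x_t$ of \cref{alg:smoothed} satisfies $f(x_t) - f(x^*) \le \eps$ whenever $t \ge \frac{4 D^2 M^2}{\eps^2}$; plugging in $D^2 \le n$, it is enough to run $t \ge \frac{4 n M^2}{\eps^2}$ iterations.

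It remains to check that each iteration uses only a constant number of the available oracle calls. The linear minimization oracle in \cref{alg:smoothed} is over $\mathcal{X} = \conv{\mathcal{Z}}$; since a linear function over a polytope attains its minimum at a vertex and every vertex of $\conv{\mathcal{Z}}$ belongs to $\mathcal{Z}$, this oracle is realized by the combinatorial linear optimization oracle over $\mathcal{Z}$. The gradient evaluation $\nabla f_\mu(x_t)$ is the optimizer of the problem in \eqref{eq:gradient_smoothed_problem}, which is polynomially solvable by assumption. Therefore the total number of oracle calls is at most $\frac{4 n M^2}{\eps^2}$ and the procedure is oracle-polynomial. I do not anticipate a genuine obstacle; the only points deserving care are this reduction of the linear minimization oracle over $\conv{\mathcal{Z}}$ to the combinatorial oracle over $\mathcal{Z}$, and — if one wants to recover the $k$ individual solutions $x^1,\dots,x^k$ rather than just the optimal value — the observation that the iterate $x_t$ is maintained as a convex combination of at most $n+1$ vertices along the run of \cref{alg:smoothed} (by Carathéodory).
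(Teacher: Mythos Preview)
Your proposal is correct and matches the paper's intended argument: the paper simply states that the corollary follows directly from \cref{thm:number_iterations}, and you have spelled out the two implicit ingredients --- the reduction from \cite{buchheim2017min} of the min-max-min problem with $k\ge n+1$ to \eqref{eq:RO} over $\mathcal X=\conv{\mathcal Z}$, and the diameter bound $D^2\le n$ from $\conv{\mathcal Z}\subseteq[0,1]^n$. One small inaccuracy in your closing remark: \cref{alg:smoothed} does not itself maintain $x_t$ as a combination of at most $n+1$ vertices; rather, Carath\'eodory guarantees such a decomposition exists and can be recovered as a post-processing step.
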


The last corollary provides the first bound on the number of oracle calls which is needed to solve min-max-min robust combinatorial optimization problems. While the authors in \citet{buchheim2017min} prove that the number of oracle calls is polynomial, no useful bound was ever derived before.

Important to note is that the proof of Theorem \ref{thm:number_iterations} relies on the convergence rate of FW but not on the specific sequence of iterates produced by standard FW. Importantly, this implies that our approach is not tied to standard FW but can use any modern FW variant that is known to enjoy faster convergence rates in many settings.
We explain the smoothing algorithms with standard FW for simplicity but will use the blended pairwise conditional gradient (BPCG) \citep{tsuji2022pairwise} in the experiments.
One key issue of the scheme developed in \cref{thm:number_iterations} is the Lipschitz smoothness parameter growing proportionally with the inverse of $\varepsilon$, which hinders convergence.
A natural modification is to use an adaptive smoothness parameter $\mu_t$ decreasing with each iteration, starting from a smooth approximation of the function and decreasing to ensure convergence to the true optimum.
This idea is summarized in \cref{alg:adaptivesmooth}.
One remaining difficulty is maintaining convergence to an $\varepsilon$-solution by balancing the trade-off between the smoothness of the current function $f_\mu$ and the approximation of the optimal solution when decreasing $\mu$. \cref{thm:dynamic_smoothness} ensures the standard convergence of $\mathcal{O}(1/t)$ when using a smoothness decreasing as $(1+t)^{-\frac12}$.
\begin{algorithm}
\KwData{Linear oracle for $\mathcal{X}$, projection oracle for $\mathcal{U}$, \ $\mathbf{x}_0 \in\mathcal{X}$, $T > 0$, $D$, $L_{\mathcal{U}}$}
\For{$t \in 1\dots T$}{
    $\mu_t = \frac{2D}{L_{\mathcal{U}} \sqrt{t+1}}$\;
    $g_t \gets \nabla f_{\mu}(\mathbf{x}_t)$ \hfill (calling the projection oracle on $\mathcal U$)\;
    $\mathbf{v}_t = \argmin\limits_{\mathbf{v}\in\mathcal{X}} \langle \mathbf{v}, g_t \rangle $ \hfill (calling the linear optimization oracle over $\mathcal X$)\;
    $\gamma_t = \frac{2}{t+1}$\;
    $\mathbf{x}_{t+1} = \mathbf{x}_{t} + \gamma_t (\mathbf{v}_{t} - \mathbf{x}_{t})$\;
    \KwSty{end for}
}
\caption{Adaptive-smoothing robust Frank-Wolfe}
\label{alg:adaptivesmooth}
\end{algorithm}

\begin{proposition}\label{thm:dynamic_smoothness}

Define $M_{\max} := \max_{c\in\mathcal{U}} \|c\|$.
The solution $x_T \in \mathcal{X}$ obtained at the $T$-th iteration of \cref{alg:adaptivesmooth} applied to Problem \eqref{eq:RO} with the following smoothness schedule
\begin{align*}
    \mu_t = \frac{2D}{M_{\max}}\frac{1}{\sqrt{t+1}},
\end{align*}
results in a primal gap of at most:
\begin{align*}
f(x_T) - f^* \leq \frac{D M_{\max}}{2\sqrt{T}}.
\end{align*}
\end{proposition}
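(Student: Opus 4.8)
The plan is to run the standard Frank--Wolfe descent analysis on the \emph{current} smoothed objective $f_{\mu_t}$ at every iteration and then pay a controlled extra cost for the fact that the objective changes from $f_{\mu_t}$ to $f_{\mu_{t+1}}$ between consecutive steps. Fix the true optimizer $x^\star$ of \eqref{eq:RO}. Since $f_{\mu_t}$ has Lipschitz-continuous gradient with constant $1/\mu_t$ by \citet[Theorem~1]{nesterov2005smooth}, the descent lemma along the step $x_{t+1} = x_t + \gamma_t(v_t - x_t)$, combined with $\|v_t - x_t\| \le D$, the vertex optimality $\langle g_t, v_t - x_t\rangle \le \langle g_t, x^\star - x_t\rangle$, convexity of $f_{\mu_t}$, and the pointwise inequality $f_{\mu_t}(x^\star) \le f(x^\star) = f^*$, gives
\begin{equation*}
 f_{\mu_t}(x_{t+1}) - f^* \;\le\; (1-\gamma_t)\bigl(f_{\mu_t}(x_t) - f^*\bigr) + \frac{\gamma_t^2 D^2}{2\mu_t}.
\end{equation*}

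Next I would quantify the effect of shrinking the smoothing parameter. The map $\mu \mapsto f_\mu(x)$ is nonincreasing, and for $\mu' \le \mu$ one has $f_{\mu'}(x) - f_\mu(x) \le \tfrac{\mu - \mu'}{2}\,M_{\max}^2$ (evaluate the first maximum at its own maximizer and use $\|c - c_0\| \le M_{\max}$). Since the schedule is decreasing, $\mu_{t+1}\le\mu_t$, hence $f_{\mu_{t+1}}(x_{t+1}) \le f_{\mu_t}(x_{t+1}) + \tfrac{\mu_t - \mu_{t+1}}{2} M_{\max}^2$. Substituting into the previous display produces a self-contained recursion for $\Phi_t := f_{\mu_t}(x_t) - f^*$:
\begin{equation*}
 \Phi_{t+1} \;\le\; (1-\gamma_t)\,\Phi_t + \frac{\gamma_t^2 D^2}{2\mu_t} + \frac{\mu_t - \mu_{t+1}}{2} M_{\max}^2.
\end{equation*}

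With $\gamma_t = 2/(t+1)$ this unrolls in the classical way: multiplying by $t(t+1)$ and setting $u_t := (t-1)t\,\Phi_t$ (so $u_1 = 0$ and $u_{t+1} \le u_t + t(t+1)\bigl(\tfrac{\gamma_t^2 D^2}{2\mu_t} + \tfrac{\mu_t - \mu_{t+1}}{2}M_{\max}^2\bigr)$), whence $u_T \le \sum_{t=1}^{T-1} t(t+1)\bigl(\tfrac{\gamma_t^2 D^2}{2\mu_t} + \tfrac{\mu_t - \mu_{t+1}}{2}M_{\max}^2\bigr)$. Plugging in $\mu_t = \tfrac{2D}{M_{\max}\sqrt{t+1}}$, the curvature term becomes $t(t+1)\tfrac{\gamma_t^2 D^2}{2\mu_t} \le D M_{\max}\sqrt{t+1}$, and, using $\tfrac{1}{\sqrt{t+1}} - \tfrac{1}{\sqrt{t+2}} \le \tfrac{1}{2(t+1)^{3/2}}$, the switch term becomes $t(t+1)\tfrac{\mu_t - \mu_{t+1}}{2}M_{\max}^2 \le \tfrac{1}{2} D M_{\max}\sqrt{t+1}$. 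Bounding $\sum_{t=1}^{T-1}\sqrt{t+1}$ by an integral yields $u_T \le c\, D M_{\max}\, T^{3/2}$ for an explicit constant $c$, hence $\Phi_T = \mathcal{O}(D M_{\max}/\sqrt{T})$. Finally, passing back to the unsmoothed objective via $f(x_T) - f^* \le \bigl(f_{\mu_T}(x_T) - f^*\bigr) + \tfrac{\mu_T}{2}M_{\max}^2 = \Phi_T + \tfrac{D M_{\max}}{\sqrt{T+1}}$, which is of the same order, gives the claimed $\mathcal{O}(1/\sqrt{T})$ rate; tracking all constants through the two sums and the terminal smoothing bias pins down the schedule coefficient $2D/M_{\max}$ as the one balancing the curvature and switch contributions and recovers the stated $\tfrac{D M_{\max}}{2\sqrt{T}}$.

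The main obstacle is the moving target: the descent lemma only controls $f_{\mu_t}(x_{t+1})$, whereas the next iteration must reason about $f_{\mu_{t+1}}$, and the comparison bound $f_{\mu_{t+1}} - f_{\mu_t} \le \tfrac{\mu_t - \mu_{t+1}}{2}M_{\max}^2$ is exactly what makes the telescoping close — an imprudent schedule would make the accumulated switch errors dominate and destroy the $1/\sqrt{T}$ rate, so the crux is verifying that these errors sum to the same order as the curvature errors. The second, more technical difficulty is extracting the precise constant $\tfrac12$ rather than a generic $\mathcal{O}(1/\sqrt{T})$: this requires careful integral estimates for $\sum \sqrt{t+1}$, the elementary bound on consecutive differences of $t^{-1/2}$, and a tight treatment of the terminal smoothing bias, all of which I would handle last.
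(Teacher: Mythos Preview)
The paper does not prove this proposition at all: it invokes \citet[Theorem~3.2]{yurtsever2018conditional} as a black box, identifying \eqref{eq:RO} with their composite setting via $\phi\equiv 0$, $A=I$, and $g(x)=\max_{c\in\mathcal U}c^\top x$, which is $M_{\max}$-Lipschitz. Your write-up is, in effect, a specialization of the \emph{proof} of that cited theorem to the present structure, so the two routes coincide in substance; yours is simply self-contained rather than imported.

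Two concrete points where your sketch needs tightening. First, the switch bound uses $\|c-c_0\|\le M_{\max}$, but with $c_0\in\mathcal U$ the guaranteed bound is the diameter $M$, not $M_{\max}$; the constant $M_{\max}$ enters naturally only when the prox-function is centered at the origin (as in Yurtsever's framework), and the paper itself silently glosses over this discrepancy when invoking the citation. Second, your claimed recovery of the exact factor $\tfrac12$ does not follow from the estimates you wrote: with $\sum_{t<T}\sqrt{t+1}\le\tfrac23 T^{3/2}$, your curvature and switch contributions give $u_T\lesssim D M_{\max} T^{3/2}$, hence $\Phi_T\lesssim D M_{\max}/\sqrt{T}$, and the terminal bias $\tfrac{\mu_T}{2}M_{\max}^2 = D M_{\max}/\sqrt{T+1}$ adds another term of the same size, landing at roughly $2D M_{\max}/\sqrt{T}$ rather than $\tfrac12 D M_{\max}/\sqrt{T}$. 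The $\mathcal O(1/\sqrt{T})$ rate is correct, but the final sentence asserting that ``tracking all constants \ldots\ recovers the stated $\tfrac{D M_{\max}}{2\sqrt{T}}$'' is not supported by the bounds you actually derived; matching the stated constant requires the tighter weighted-sum telescoping used in the cited reference rather than the plain $(t-1)t$ unrolling.
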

\begin{proof}
The proof follows directly from \citet[Theorem 3.2]{yurtsever2018conditional}, which considers a composite objective $\phi(x) + g(Ax)$ with $\phi$ smooth and $g$ Lipschitz-continuous. Our problem setting applies with $\phi(\cdot) = 0$, $A = I$, and $g(x) = \max\limits_{c\in\mathcal{U}} c^\top x$ which is Lipschitz-continuous with constant $M_{\max}$.
\end{proof}

One interesting aspect we want to highlight is that the FW-based convergence analysis can be generalized to the inexact LMO setting in which the vertices produced incur an additive or multiplicative error on the LMO subproblem.
This is particularly important for expensive oracles for which one can ease the computational burden by requiring approximate solutions.
Indeed, our analysis relies on a primal gap being reached by FW on the smoothed problem which can be ensured with a dependence on the subproblem error.
For the fixed smoothing algorithm, the inexact oracle settings with additive and multiplicative errors were analyzed in \citet{jaggi2013revisiting}, ensuring an $\varepsilon$-optimal solution to the original problem.
\citet{yurtsever2018conditional} also propose generalizations of \citet[Theorem 3.2]{yurtsever2018conditional} to the two types of oracle error.

\subsection{Function and Gradient Evaluations}

Evaluating $f_\mu$ and $\nabla f_\mu$ at $x$ requires solving the regularized adversarial problem:
\begin{align*}
    \max_{c \in \mathcal{U}} \, c^\top x - \frac{\mu}{2} \|c - c_0\|^2 \,\,\, \Leftrightarrow 
    \;\; \max_{c \in \mathcal{U}} \, - \frac{\mu}{2} \|c - (c_0 + \frac{x}{\mu})\|^2.
\end{align*}
The inner subproblem thus amounts to an Euclidean projection onto the uncertainty set.
Hence, for classical budgeted uncertainty of the form
\begin{align*}
    \mathcal{U}(d,\Gamma, \underline{c}) = \{ c \in \left[\underline{c},\underline{c} + d\right], \sum_j \frac{c_j - \underline{c}_j}{d_j} \leq \Gamma \}.
\end{align*}
the projection problem results in a quadratic knapsack problem. Dualizing the knapsack constraint makes the projection problem fully separable; this property is leveraged in particular by \emph{breakpoint algorithms} for diagonal quadratic convex continuous knapsack problems. We refer the reader to \citet{patriksson2015algorithms} for a recent review of solution approaches, from which we implement the two-pegging breakpoint algorithm.

In case the uncertainty set is the convex hull of a finite scenario set $\{c_s\}_{s\in S}$, projecting on $\mathcal U$ amounts to solving the quadratic problem:
\begin{align*}
    \min_{c, \lambda} \,\, & \frac{\mu}{2} \|c - (c_0 +  x / \mu)\|^2\\
    \text{s.t. } \,\,& \sum_{s\in S} \lambda_s c_s = c \\
    & \lambda \geq 0, \sum_{s\in S} \lambda_s = 1.
\end{align*}

\subsection{Faster Solutions and Dual Bounds from Convex Hulls}\label{sec:convhull}

Although the objective function is transformed from piecewise linear to smooth, its original structure can be exploited to accelerate the algorithm.
In particular, the FW-based method we propose provides a convergence rate in terms of gradient and linear oracle calls but does not ensure a convergence in finite time to the optimal solution. We augment the run of the algorithm with the computation of the minimizer of the original robust objective over the convex hull of the vertices observed throughout all FW iterations, similar to the oracle used in \citet{bettiol2023oracle}:
\begin{align*}
x_{\mathrm{conv}} \in \argmin\limits_{x\in\conv{v_1,\ldots ,v_t}} \max_{c\in \mathcal U} c^\top x,
\end{align*}
where $v_1,\ldots ,v_t \in \mathcal X$ are vertices obtained at previous iterations. The result of this subproblem can be used in two aspects.
First, the minimizer $x_{\mathrm{conv}}$ can provide an improved primal bound over the best FW solution found so far.
Second and most importantly, we can derive at $x_{\mathrm{conv}}$ a subgradient $c_{\mathrm{conv}}$ of $f$ and $v_{\mathrm{conv}} \in \argmin_{v \in \Xset} \innp{c_{\mathrm{conv}}}{v}$, producing a suboptimality gap $\innp{c_{\mathrm{conv}}}{x_{\mathrm{conv}} - v_{\mathrm{conv}}}$ which reaches 0 at the optimum (see \citet[Lemma 1]{bettiol2023oracle}). This second aspect plays a more decisive role in the computational experiments.
\\

We highlight that the suboptimality gap is akin in expression to the FW gap in the smooth case, allowing for a parallel to be drawn between the algorithm from \citet{bettiol2023oracle} and the Fully Corrective Frank-Wolfe algorithm which also optimizes over the convex hull of the current set of vertices at each iteration. A distinctive characteristic of the Simplicial Decomposition algorithm is that the convex hull problem is tractable exactly as a linear program and does not require handling error as the FCFW corrective step.

\section{Experiments}

We showcase the computational effectiveness of our algorithm on the robust minimum-weight spanning tree and traveling salesperson problems.
We compare the true function value $f(x)$ as iterations progress and focus on the constraint generation algorithm (\texttt{consgen}) from \citet{buchheim2017min}, a FW variant with fixed smoothing (\texttt{FW}), the same algorithm with adaptive smoothing (\texttt{A-FW}), and the fixed-smoothing FW algorithm with an additional convex hull solve and subgradient bound introduced in \cref{sec:convhull} (\texttt{FW-convhull}). We do not include the subgradient method from \citet{ben2015oracle} since it iteratively tightens a dual bound but does not produce a sequence of primal solutions.\footnote{The code for all experiments is made publicly available at \url{https://github.com/matbesancon/oracle_robust_smoothing_fw}.}

For the spanning tree example, the LMO uses the Kruskal algorithm as implemented in \texttt{Graphs.jl}.
We use the \texttt{FrankWolfe.jl}~\citep{besanccon2022frankwolfe} implementation of the blended pairwise conditional gradient with lazification, meaning the algorithm only calls the LMO when it cannot perform sufficient progress over the current set of vertices.

\begin{figure*}[t!]
    \centering
    \begin{subfigure}[t]{0.32\textwidth}
        \centering
        \includegraphics[height=1.1in]{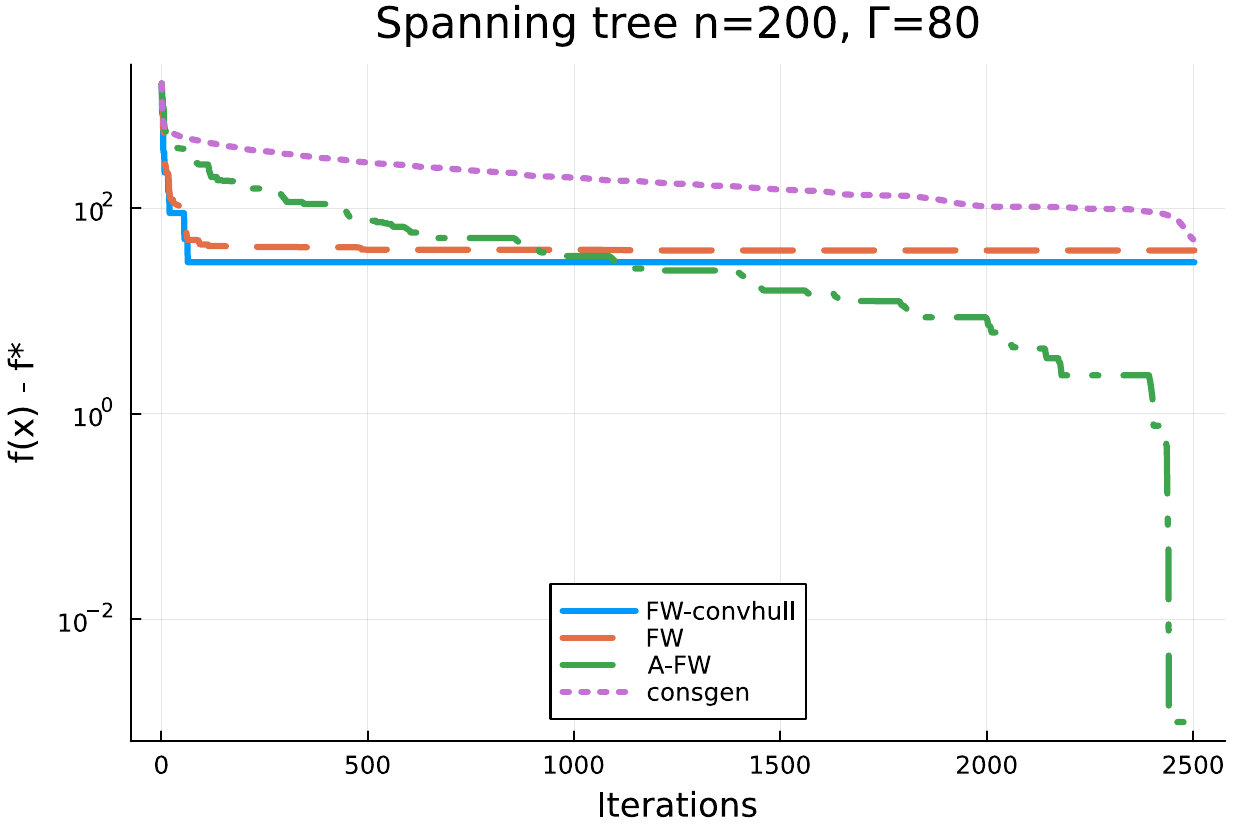}
        \caption{Iterations}
    \end{subfigure}
    \begin{subfigure}[t]{0.32\textwidth}
        \centering
        \includegraphics[height=1.1in]{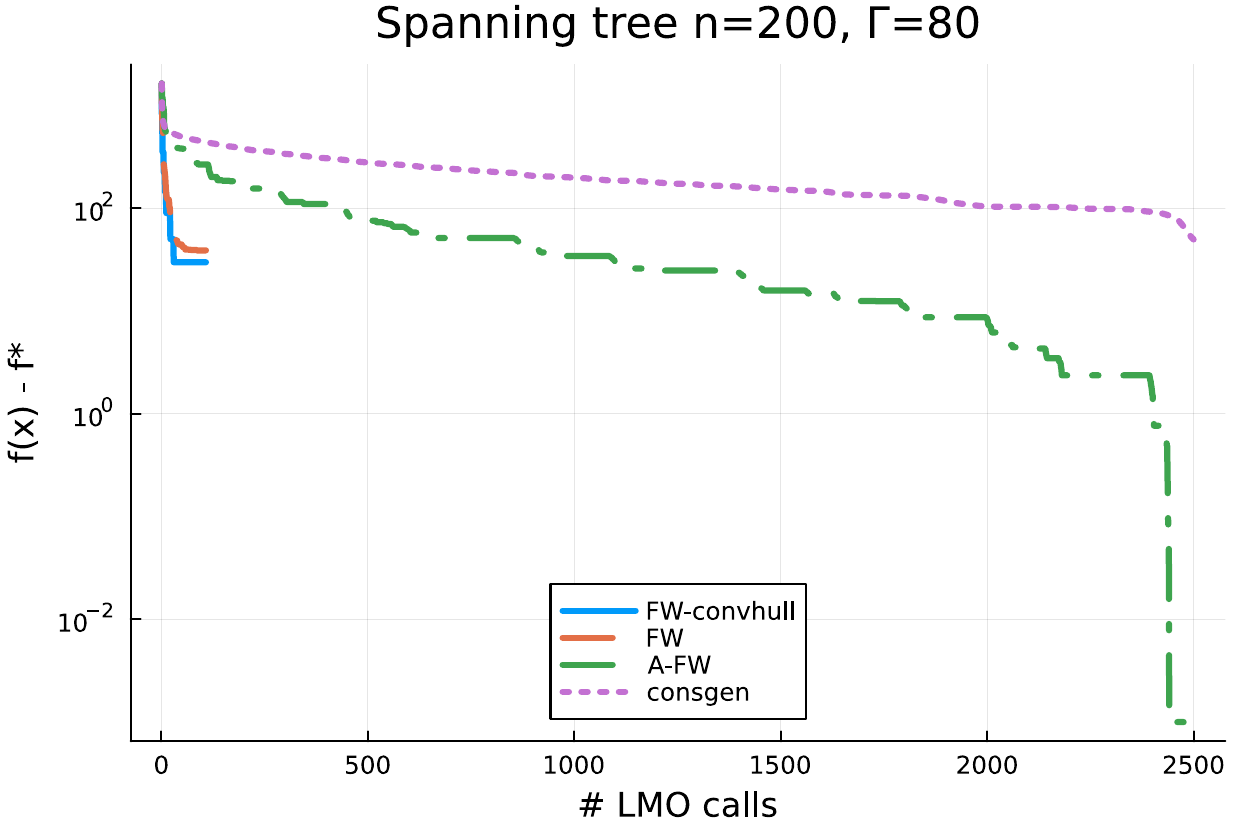}
        \caption{Oracle calls}
    \end{subfigure}
    \begin{subfigure}[t]{0.32\textwidth}
        \centering
        \includegraphics[height=1.1in]{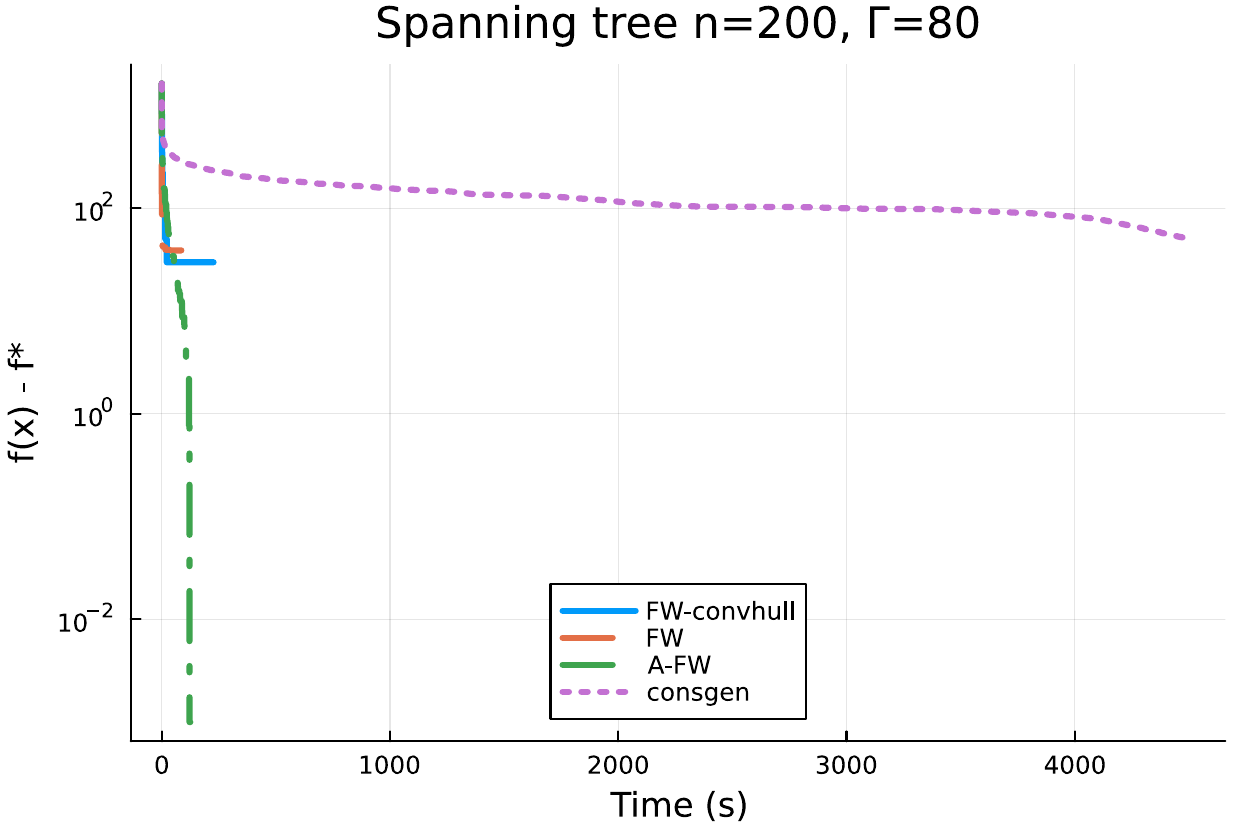}
        \caption{Runtime}
    \end{subfigure}
    \caption{Comparison of the constraint generation and FW-based algorithms on a robust spanning tree example. 
    }
    \label{fig:spanresults}
\end{figure*}

\cref{fig:spanresults} presents a comparison of the algorithms on the robust minimum-weight spanning tree problem in terms of iterations, runtime, and linear oracle calls.
All algorithms stop after a limit of 10000 iterations or 2500 LMO calls.
The limit on LMO calls is always the limiting one for \texttt{A-FW} and \texttt{consgen} since they both require one LMO call per iteration, also implying that these two algorithms potentially struggle more at larger scales for costly LMOs. On the other hand, FW algorithms with a fixed smoothing perform only about a hundred LMO calls throughout their iterations, which is typical for lazified FW variants \citep{braun2019lazifying}.
The constraint generation algorithm incurs a high cost per iteration which we do not observe with \texttt{A-FW} at an equivalent number of LMO calls. The explanation lies in the cost of the linear problem \eqref{eq:dual_ro_epigraph} solved at every iteration for which the number of constraints grows with the number of iterations. Hence, this LP can become intractable at larger scale and stall all progress.
Many instances even stop due to the memory requirements. On the contrary, the cost per iteration of FW methods is vastly dominated by the LMO since the gradient computation amounts to the projection onto the uncertainty set which remains tractable for many typical sets of interest, including the budgeted uncertainty set.
For smaller instances, the constraint generation reaches the exact optimum, benefiting from its finite time property unlike the smoothing approach combined with iterative algorithms.

We also study the influence of $\Gamma$ on the performance of the different methods for a fixed dimension, illustrated in \cref{fig:spanresultsgamma}, on 2500 iterations only.
We observe a decreased relative performance of constraint generation when the uncertainty budget $\Gamma$ increases, while it converges very fast for low $\Gamma$ values.
This performance behavior can similarly be tied to the LP solved in the space of uncertain parameters: since the uncertainty set is larger, more constraints corresponding to vertices of $\mathcal X$ have to be generated and added. This behavior is aligned with the observations made in \citet{buchheim2017min}.
We also note a large difference in performance between \texttt{FW} and \texttt{FW-convhull} for $\Gamma=60$, the subproblem over the convex hull effectively computing better primal solutions for that instance.

\begin{figure*}[t!]
    \centering
    \begin{subfigure}[t]{0.32\textwidth}
        \centering
        \includegraphics[height=1.1in]{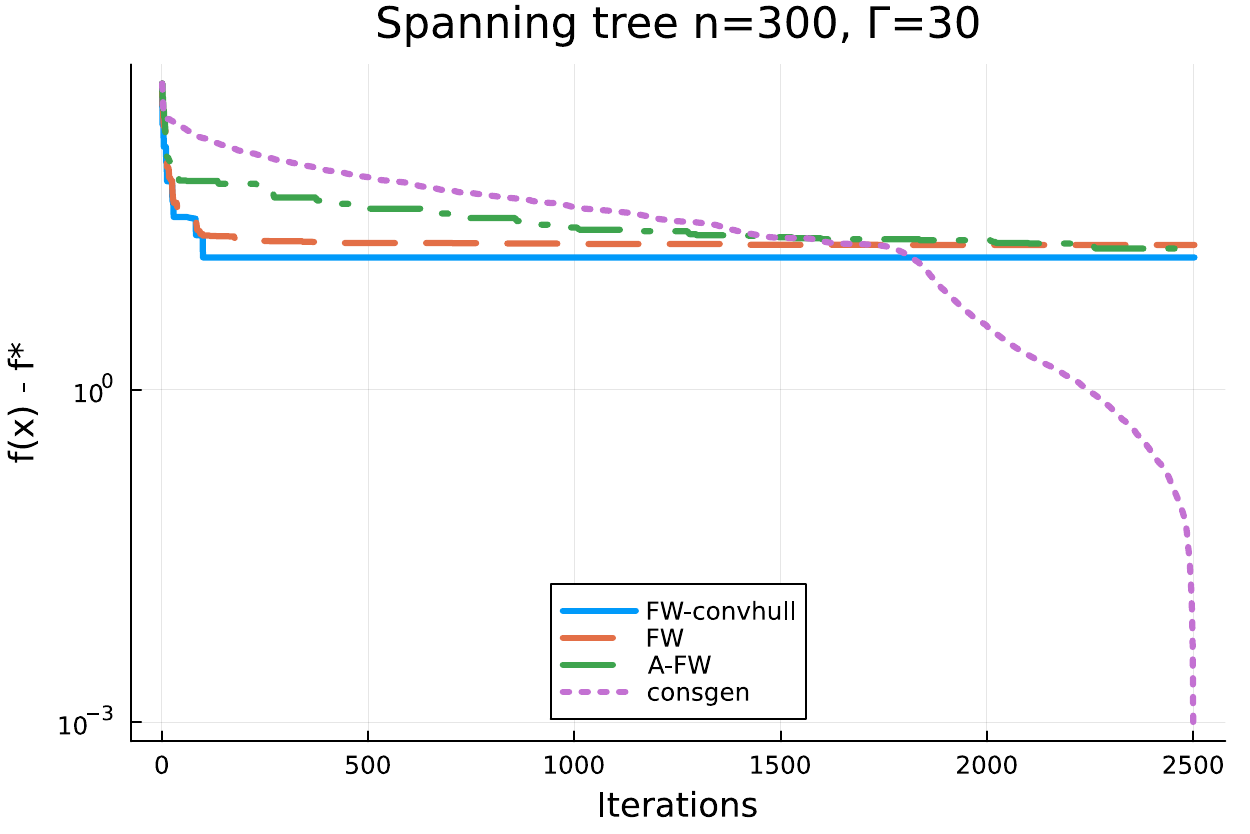}
    \end{subfigure}
    \begin{subfigure}[t]{0.32\textwidth}
        \centering
        \includegraphics[height=1.1in]{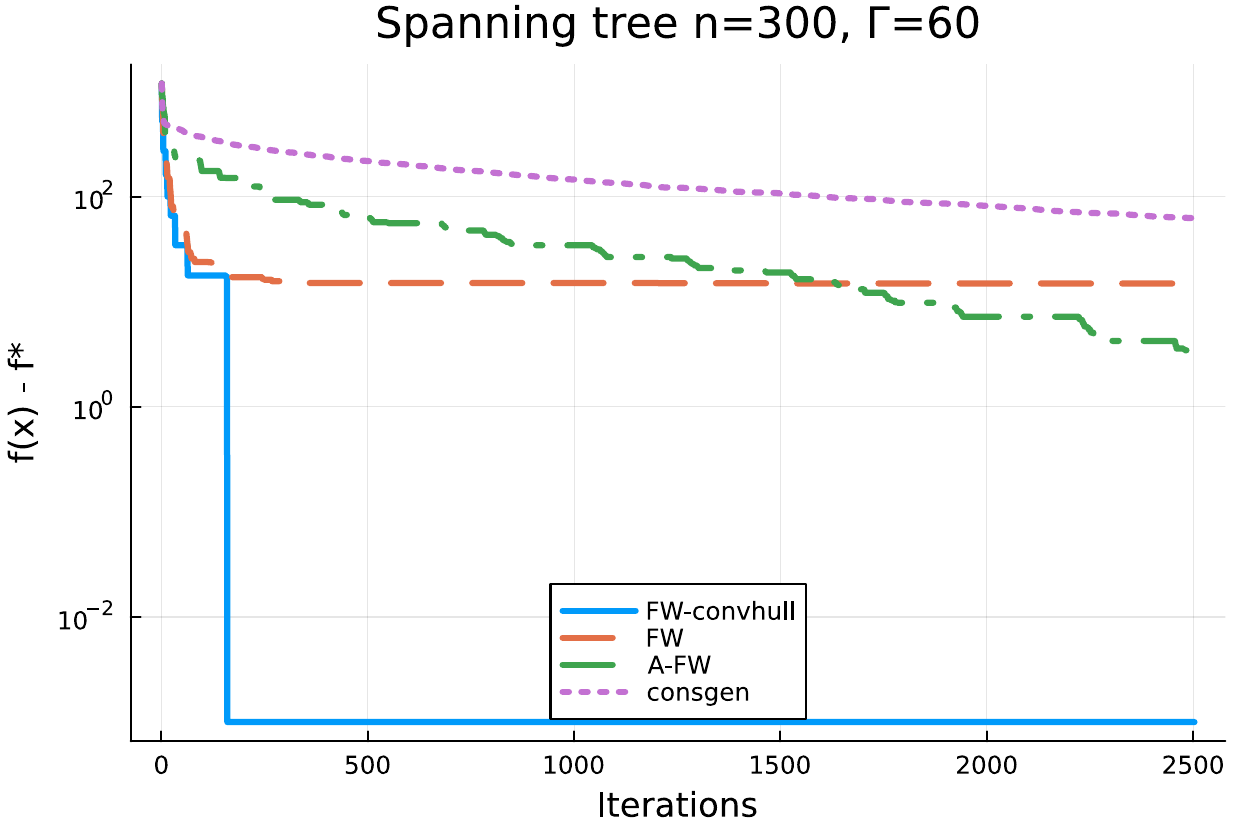}
    \end{subfigure}
    \begin{subfigure}[t]{0.32\textwidth}
        \centering
        \includegraphics[height=1.1in]{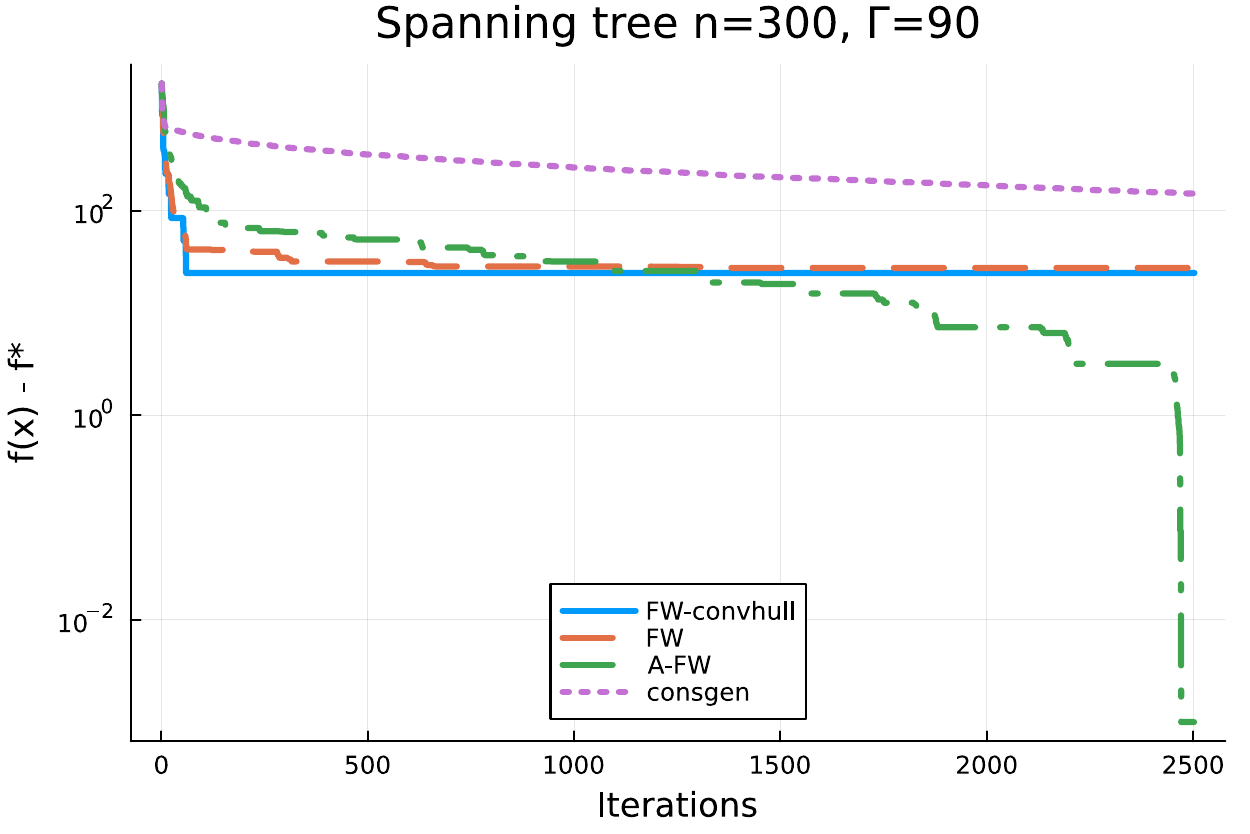}
    \end{subfigure}
    \caption{Primal value against iteration for $n=300$ for the ST problem for $\Gamma \in \{30,60,90\}$.}
    \label{fig:spanresultsgamma}
\end{figure*}

The relative performance of the different algorithms also carries over to use cases with more expensive LMOs. We assess them on robust TSP instances, where the LMO builds a MIP with lazy subtour elimination constraints with JuMP \citep{lubin2023jump}, then solved with GLPK \citep{makhorin2008glpk}. The results are presented in \cref{fig:tspresults}.
\begin{figure*}[t!]
    \centering
    \begin{subfigure}[t]{0.32\textwidth}
        \centering
        \includegraphics[height=1.1in]{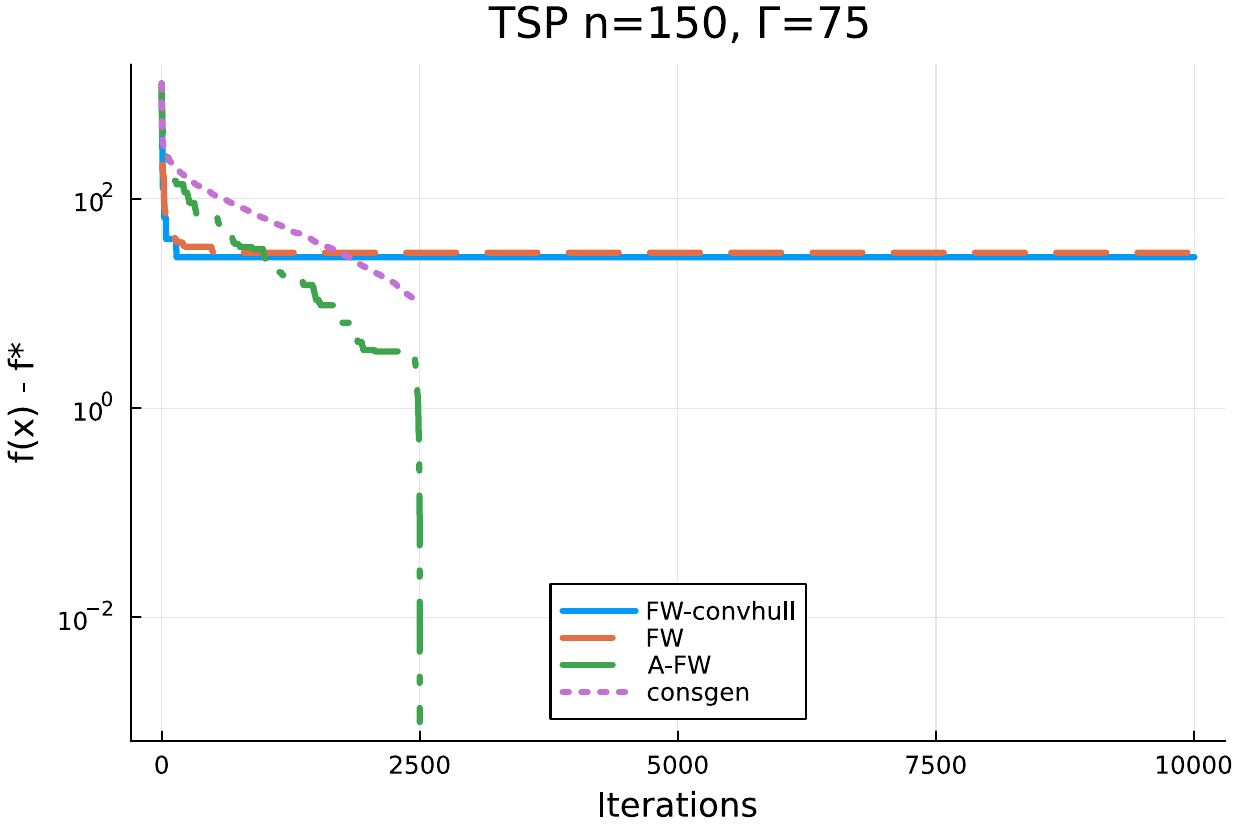}
    \end{subfigure}
    \begin{subfigure}[t]{0.32\textwidth}
        \centering
        \includegraphics[height=1.1in]{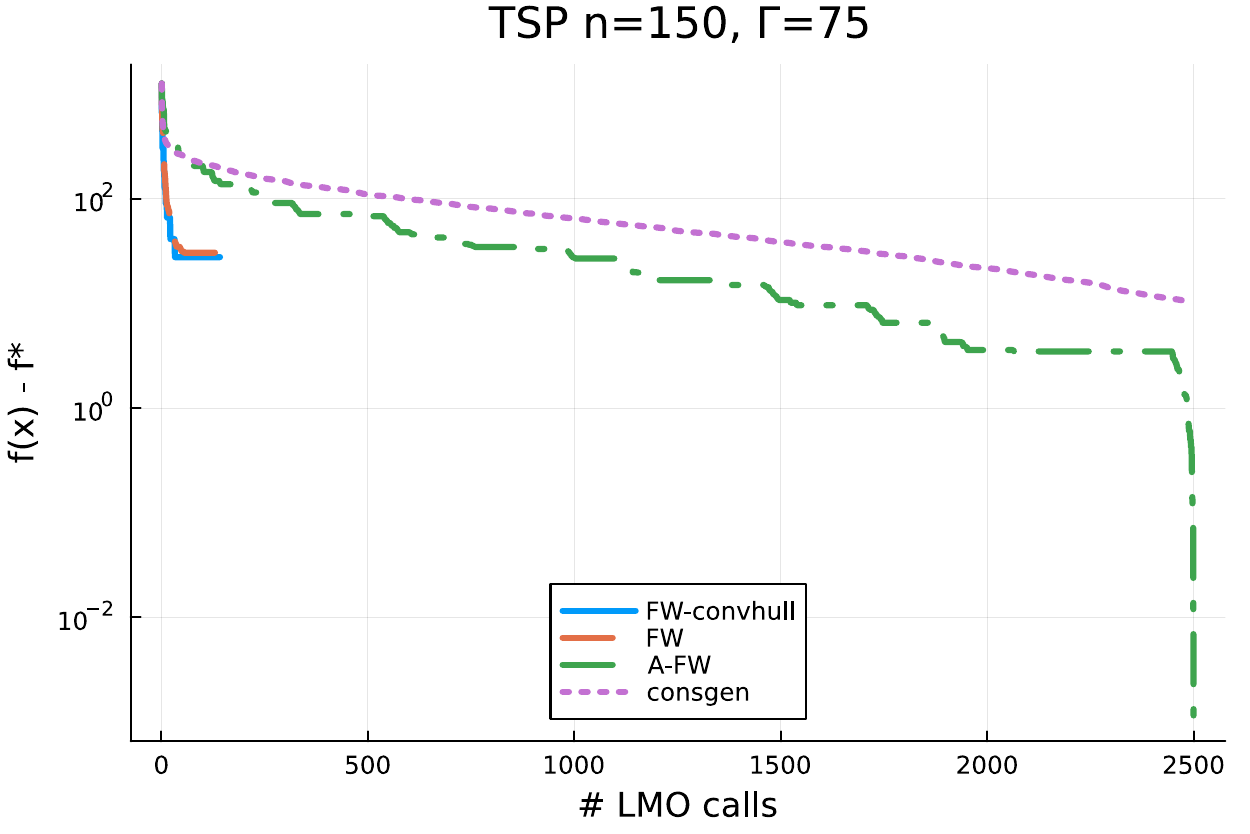}
    \end{subfigure}
    \begin{subfigure}[t]{0.32\textwidth}
        \centering
        \includegraphics[height=1.1in]{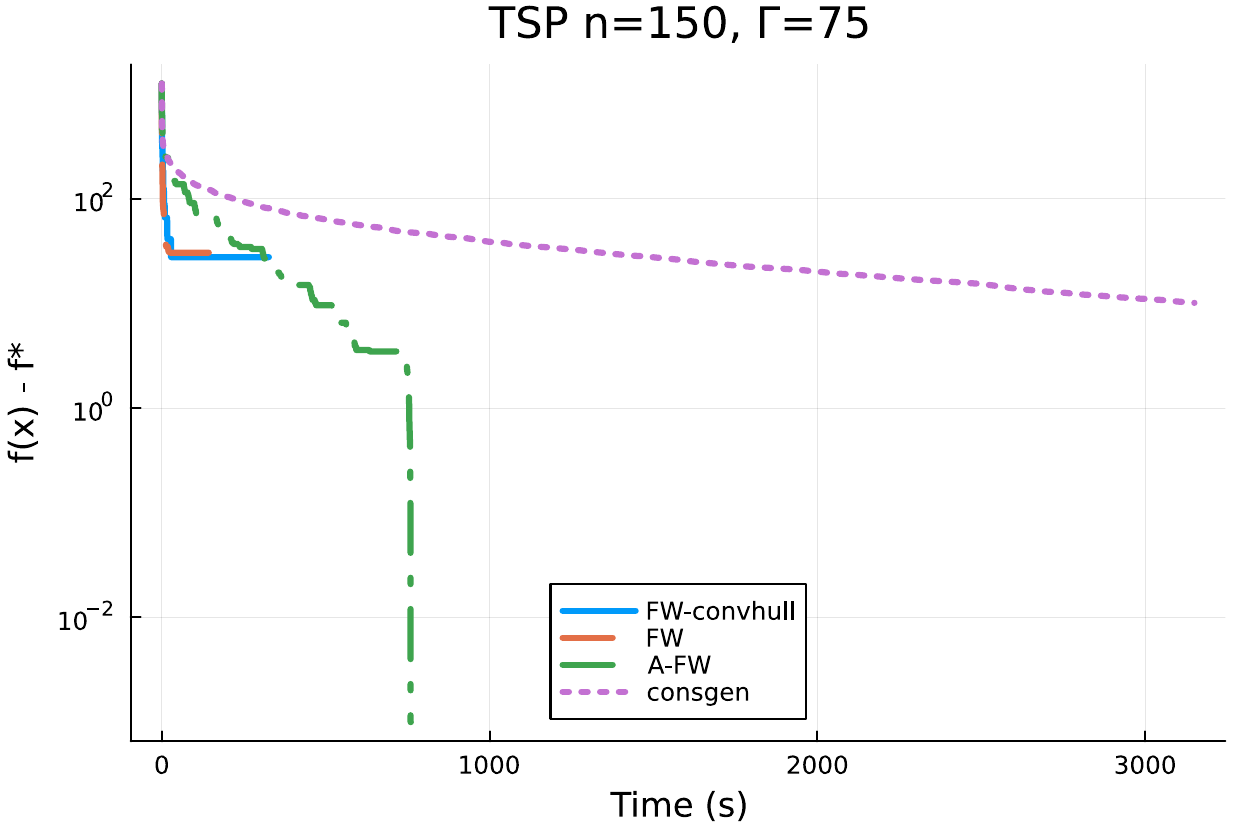}
    \end{subfigure}
    \caption{Comparison of the constraint generation and FW-based algorithms on a TSP example.}
    \label{fig:tspresults}
\end{figure*}
The results show in particular that constraint generation takes significantly more time than other methods, and in particular than adaptive FW, despite performing the same number of LMO calls that are notably more expensive than for spanning tree instances. Despite this important cost of LMO calls, constraint generation remains significantly slower due to the cost of the auxilary LPs.

\section{Conclusion}
In this work we solve the classical robust min-max problem with convex feasible region by a FW-type method applied to a smoothed version of the problem. The algorithm only uses a linear minimization oracle for the feasible region and does not require a compact description of it. By combining several concepts from the FW literature we could design an algorithm which outperforms the state-of-the-art on high-dimensional combinatorial problems under large uncertainty budget. Furthermore, the convergence analysis of the FW method leads to a theoretical bound on the number of oracle calls which are needed to achieve convergence.

While several oracle-based algorithms exist for classical robust optimization problems the literature for two-stage robust problems (2RO) is very sparse. While our framework can be incorporated into branch \& bound procedures to solve 2RO (see \citep{kammerling2020oracle}) it would be interesting to develop more direct oracle-based algorithms. Especially, the constraint-uncertainty case is not well-studied in this regard.




\bibliography{references}

\appendix

\end{document}